\newcommand{\qdn}{\hspace*{-1.5mm}}
\newcommand{\xqdn}{\hspace*{-5.0mm}}
\newcommand{\xxqdn}{\hspace*{-10mm}}
\newcommand{\fns}{\footnotesize}
\newcommand{\ffnk}[4]{\left[\qdn\ba{#1}#3\\[0.7mm]#4\ea{\!\Big|\:#2}\right]}
\newcommand{\binm}{\binom}
\newcommand{\nnm}{\nonumber}
\newcommand{\be}{\begin{equation}}
\newcommand{\ee}{\end{equation}}
\newcommand{\ba}{\begin{array}}
\newcommand{\ea}{\end{array}}
\newcommand{\bmn}{\begin{eqnarray}}
\newcommand{\emn}{\end{eqnarray}}
\newcommand{\bnm}{\begin{eqnarray*}}
\newcommand{\enm}{\end{eqnarray*}}
\newcommand{\bln}{\begin{subequations}}
\newcommand{\eln}{\end{subequations}}
\newtheorem{thm}{Theorem}
\newtheorem{corl}[thm]{Corollary}
\newtheorem{entry}{Entry}
\newcommand{\bbtm}[4]{\bibitem{kn:#1}{#2,}~{#3,}~{#4.}}
\newcommand{\cito}[1]{\cite{kn:#1}}
\newcommand{\citu}[2]{\cite[#2]{kn:#1}}
\begin{document} 
{\fns
\title{q-Analogues of several $\pi$-formulas}
\author{Chuanan Wei}
\dedicatory{
Department of Medical Informatics\\
  Hainan Medical University, Haikou 571199, China}
\thanks{\emph{Email address}:
      weichuanan78@163.com}

\address{ }
\footnote{\emph{2010 Mathematics Subject Classification}: Primary
11B65 and Secondary 05A10.}

\keywords{$\pi$-formulas; $q$-analogues; $q$-series method;
telescoping method}

\begin{abstract}
According to the $q$-series method, a short proof for Hou and Sun's
identity, which is the $q$-analogue of a known $\pi$-formula, is
offered. Furthermore, $q$-analogues of several other $\pi$-formulas
are also established in terms of the $q$-series method and the
telescoping method.
\end{abstract}

\maketitle\thispagestyle{empty}
\markboth{Chuanan Wei}
         {q-Analogues of several $\pi$-formulas}

\section{Introduction}

For two complex numbers $x$, $q$ and an integer $n$ with $|q|<1$,
define $q$-shifted factorial to be
 \bnm
(x;q)_{\infty}=\prod_{i=0}^{\infty}(1-xq^i),\quad
(x;q)_n=\frac{(x;q)_{\infty}}{(xq^n;q)_{\infty}}.
 \enm
For convenience, we shall adopt the following notations:
 \bnm
&&(x_1,x_2,\cdots,x_r;q)_{\infty}=(x_1;q)_{\infty}(x_2;q)_{\infty}\cdots(x_r;q)_{\infty},\\
&&(x_1,x_2,\cdots,x_r;q)_{n}=(x_1;q)_{n}(x_2;q)_{n}\cdots(x_r;q)_{n}.
 \enm
Following Gasper and Rahman \cito{gasper}, the basic hypergeometric
series can be defined by
 \bnm
{_{1+r}\phi_s}\ffnk{cccccc}{q;z}{a_0,&a_1,&\cdots,a_r}{&b_1,&\cdots,b_s}
  =\sum_{k=0}^{\infty}\frac{(a_0,a_1,\cdots,a_r;q)_k}{(q,b_1,\cdots,b_s;q)_k}
\Big\{(-1)^kq^{\binm{k}{2}}\Big\}^{s-r}z^k.
 \enm

Recently, Guo and Liu \cito{guo-a} utilized WZ method to derive the
two identities
 \bmn
&&\sum_{k=0}^{\infty}q^{k^2}\frac{1-q^{6k+1}}{1-q}\frac{(q;q^2)_k^2(q^2;q^4)_k}{(q^4;q^4)_k^3}
=\frac{(1+q)(q^2;q^4)_{\infty}(q^6;q^4)_{\infty}}{(q^4;q^4)_{\infty}^2},
  \label{q-ramanujan-a}\\\label{q-ramanujan-b}
&&\sum_{k=0}^{\infty}(-1)^kq^{3k^2}\frac{1-q^{6k+1}}{1-q}\frac{(q;q^2)_k^3}{(q^4;q^4)_k^3}
=\frac{(q^3;q^4)_{\infty}(q^5;q^4)_{\infty}}{(q^4;q^4)_{\infty}^2},
 \emn
which are $q$-analogues of two formulas due to Ramanujan
\cito{ramanujan}
 \bnm
&&\sum_{k=0}^{\infty}(6k+1)\frac{(\frac{1}{2})_k^3}{k!^34^k}=\frac{4}{\pi},\\
&&\sum_{k=0}^{\infty}(-1)^k(6k+1)\frac{(\frac{1}{2})_k^3}{k!^38^k}=\frac{2\sqrt{2}}{\pi}.
\enm
 Here, we have employed the Pochhammer symbol
 \[(x)_0=1\quad\text{and}\quad(x)_n=x(x+1)\cdots(x+n-1)\quad\text{when}\quad n=1,2,\cdots.\]

 Hou, Krattenthaler, and Sun \cito{sun-a} showed that
 \eqref{q-ramanujan-a} and \eqref{q-ramanujan-b} can be deduced
 through the intelligent use of a quadratic transformation formula from
 \citu{gasper}{p. 92} and gave $q$-analogues of the
 classical Leibniz series and a Zeilberge-type series. Later, Guo
 and Zudilin \cito{guo-b} also proved the two formulas in accordance with a quadratic
 summation formula
 of Rahman \cito{rahman} and established $q$-analogues
 of several other  Ramanujan-type formulas via the WZ-method.
 Recall three simple $\pi$-formulas
 \bmn\label{pi-a}
&&\xxqdn\sum_{k=1}^{\infty}\frac{1}{k^2}=\frac{\pi^2}{6},\\
&&\xxqdn\sum_{k=0}^{\infty}\frac{(-1)^k}{(2k+1)^3}=\frac{\pi^3}{32},
  \label{pi-b}\\\label{pi-c}
&&\xxqdn\sum_{k=1}^{\infty}\frac{1}{k^4}=\frac{\pi^4}{90}.
 \emn
Sun \cito{sun-b} found two $q$-analogues of \eqref{pi-a} and a
$q$-analogue of \eqref{pi-c}. Hou and Sun \cito{sun-c} offered the
following $q$-analogue of \eqref{pi-b}:
  \bmn\label{sun}
\sum_{k=0}^{\infty}(-1)^k\frac{q^{2k}(1+q^{2k+1})}{(1-q^{2k+1})^3}
=\frac{(q^2;q^4)_{\infty}^2(q^4;q^4)_{\infty}^6}{(q;q^2)_{\infty}^4}.
 \emn
More $q$-analogues of $\pi$-formulas can be seen in the papers
\cito{guillera-c} and \cito{guo-c}.

 In the literature of $\pi$-formulas, there exist five interesting
 results
 \bmn
 &&\label{weisstein-a}\quad\qquad\sum_{k=0}^{\infty}\frac{k!}{(\frac{3}{2})_k}\frac{1}{2^k}=\frac{\pi}{2},\\
 &&\label{weisstein-b}\quad\qquad\sum_{k=0}^{\infty}\frac{k!}{(\frac{3}{2})_k}\frac{1}{4^k}=\frac{2\pi}{3\sqrt{3}},\\
 &&\label{guillera-a}\quad\qquad\sum_{k=0}^{\infty}\frac{k!^3}{(\frac{3}{2})_k^3}\frac{3k+2}{4^k}=\frac{\pi^2}{4},\\
 \frac{\sin(\pi x)\sin(\pi y)}{\pi^2}&&\xqdn\!=\sum_{k=0}^{\infty}\frac{(x)_k(1-x)_k(y)_k(1-y)_k}{k!^2(k+1)!^2}
 \nnm\\\label{wei-a}
 &&\xqdn\!\times\:\big\{(k^2+k)(x+y-x^2-y^2)+xy(1-x)(1-y)\big\},\\
 \frac{\pi^2}{\sin(\pi x)\sin(\pi y)}&&\!\xqdn=\frac{1}{(1-x)(1-y)}+\sum_{k=0}^{\infty}\frac{k!^4}{(x)_{k+1}(1-x)_{k+2}(y)_{k+1}(1-y)_{k+2}}
  \nnm\\\label{wei-b}
  &&\!\xqdn\times\:\big\{(k^2+2k)(2-2x-2y+x^2+y^2)+1-xy(2-x)(2-y)\big\},
 \emn
where \eqref{weisstein-a}-\eqref{weisstein-b} can be found in
Weisstein \cito{weisstein}, \eqref{guillera-a} can be seen in
Guillera \cito{guillera-a}, and \eqref{wei-a}-\eqref{wei-b} are due
to Wei et al. \cito{wei-a}. It should be mentioned that the case
$x=y=\frac{1}{2}$ of \eqref{wei-a} is exactly the result of Guillera
\cito{guillera-b}:
 \bmn \label{guillera-b}
\qquad
\frac{2}{\pi^2}=\sum_{k=0}^{\infty}\frac{(1/2)_{k}^4}{k!^2(k+1)!^2}\{k^2+k+1/8\}.
 \emn

The structure of the paper is arranged as follows. In Section 2, we
shall prove \eqref{sun}, construct a new $q$-analogue of
\eqref{pi-a}, and derive $q$-analogues of
\eqref{weisstein-a}-\eqref{guillera-a} by means of the $q$-series
method. $q$-Analogues of \eqref{wei-a}-\eqref{guillera-b} will be
established through the telescoping method in Section 3.

\section{$q$-Series method and $q$-analogues of $\pi$-formulas}
Above all, we shall give a simple proof of \eqref{sun} in this
section. Subsequently, a new $q$-analogue of \eqref{pi-a} will be
deduced in Theorem \ref{thm-b}. Then $q$-analogue of
\eqref{weisstein-a}-\eqref{guillera-a} will respectively be
established in Theorems \ref{thm-c}-\ref{thm-e}.

\begin{thm}\label{thm-a}
\bnm
\sum_{k=0}^{\infty}(-1)^k\frac{q^{2k}(1+q^{2k+1})}{(1-q^{2k+1})^3}
=\frac{(q^2;q^4)_{\infty}^2(q^4;q^4)_{\infty}^6}{(q;q^2)_{\infty}^4}.
 \enm
\end{thm}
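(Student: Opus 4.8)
The plan is to strip the identity down, by an elementary geometric‑series manipulation, to a single Lambert‑type series, and then to recognize the reduced identity as an equality of two $q$‑expansions of one and the same weight‑$3$ object. First I would use the power‑series identity $\sum_{n\ge1}n^2x^n=x(1+x)/(1-x)^3$: taking $x=q^{2k+1}$ it reads $\dfrac{q^{2k}(1+q^{2k+1})}{(1-q^{2k+1})^3}=\dfrac1q\sum_{n\ge1}n^2q^{n(2k+1)}$, so inserting this into the left‑hand side and interchanging the two summations (legitimate for $|q|<1$, since $\sum_{k\ge0}\sum_{n\ge1}n^2|q|^{n(2k+1)}<\infty$), the inner sum over $k$ becomes geometric and collapses, giving
\[
\sum_{k=0}^{\infty}(-1)^k\frac{q^{2k}(1+q^{2k+1})}{(1-q^{2k+1})^3}=\frac1q\sum_{n=1}^{\infty}\frac{n^2q^n}{1+q^{2n}}.
\]
On the other side, a short rearrangement of $q$‑shifted factorials using $(q^2;q^2)_\infty=(q^2;q^4)_\infty(q^4;q^4)_\infty$ identifies the product in the theorem with $\psi(q)^4\psi(q^2)^2$, where $\psi(q)=\sum_{n\ge0}q^{n(n+1)/2}=(q^2;q^2)_\infty/(q;q^2)_\infty$. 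Thus the theorem is equivalent to the single Lambert‑series evaluation
\[
\sum_{n=1}^{\infty}\frac{n^2q^n}{1+q^{2n}}=q\,\psi(q)^4\psi(q^2)^2 .
\]

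To prove this last identity, one robust route is to observe that both sides are the $q$‑expansion of the same holomorphic modular form of weight $3$: the left side has $q^m$‑coefficient $\sum_{d\mid m}\chi_{-4}(m/d)\,d^2$ (with $\chi_{-4}$ the nonprincipal character modulo $4$), so it is, up to a constant, the weight‑$3$ Eisenstein series with exactly those Fourier coefficients, whereas $\psi(q)^4\psi(q^2)^2$ is a holomorphic modular form of the same weight, level, and character and has vanishing constant term; since that space of modular forms is finite‑dimensional, matching finitely many Fourier coefficients (up to the Sturm bound) concludes the proof. Staying closer to the $q$‑series method advertised in the introduction, one can instead write the left‑hand side of the theorem itself in basic hypergeometric form --- using $\tfrac1{1-q^{2k+1}}=\tfrac1{1-q}\,(q;q^2)_k/(q^3;q^2)_k$ and $1+q^{2k+1}=(1+q)\,(-q^3;q^2)_k/(-q;q^2)_k$ it becomes $\dfrac{1+q}{(1-q)^3}\,{}_{5}\phi_{4}\!\bigl[\begin{smallmatrix}q^2,\,q,\,q,\,q,\,-q^3\\ q^3,\,q^3,\,q^3,\,-q\end{smallmatrix};q^2,\,-q^2\bigr]$, a well‑poised series --- and evaluate it from an appropriate known (quadratic or well‑poised) $q$‑summation; equivalently, one may note that $\sum_{n\ge1}\frac{q^nz^n}{1+q^{2n}}=-z\,\frac{d}{dz}\log\frac{(qz;q^4)_\infty}{(q^3z;q^4)_\infty}$ and apply $(z\,d/dz)^2$ at $z=1$, reducing the evaluation to a classical theta‑function identity.

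The first paragraph is routine bookkeeping; the genuine obstacle is the weight‑$3$ evaluation in the second. Whatever route one follows, the crux is to produce exactly the right summation (or theta identity) that collapses the Lambert series $\sum_{n\ge1}n^2q^n/(1+q^{2n})$ into $q\,\psi(q)^4\psi(q^2)^2$, and selecting and correctly applying that identity is the step I expect to require the most care.
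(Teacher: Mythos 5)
Your opening reduction is correct and frames the identity in a genuinely different way from the paper. Expanding $q^{2k}(1+q^{2k+1})/(1-q^{2k+1})^3=q^{-1}\sum_{n\ge1}n^2q^{n(2k+1)}$ and collapsing the geometric series in $k$ does yield $q^{-1}\sum_{n\ge1}n^2q^n/(1+q^{2n})$, and the product side is indeed $\psi(q)^4\psi(q^2)^2$, so the theorem is equivalent to the Lambert--theta identity you state; this exposes the arithmetic content (the coefficients $\sum_{d\mid m}\chi_{-4}(m/d)\,d^2$) in a way the paper's purely hypergeometric proof does not.

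The gap is that the crux identity is never actually proved: you list three strategies and execute none. The modular-forms route is sound in principle --- $q\,\psi(q)^4\psi(q^2)^2=\eta(2\tau)^6\eta(4\tau)^4/\eta(\tau)^4$ and the twisted Eisenstein series both live in $M_3(\Gamma_0(16),\chi_{-4})$, and the Sturm bound there is only $6$ --- but you have not pinned down the level, verified holomorphy of the eta quotient at the cusps, or done the coefficient check; note also that $S_3(\Gamma_0(16),\chi_{-4})$ is nonzero (it contains the CM form $\eta(4\tau)^6$), so the finite verification is genuinely necessary and not a formality about Eisenstein series alone. Your second, ``$q$-series'' route is in fact exactly where the paper goes: the ${}_5\phi_4$ you write down is precisely the series the paper obtains, and the ``appropriate known summation'' you would need is the ${}_8\phi_7$ summation in Gasper--Rahman (p.~355) specialized at $a=q^{1/2}$, $c=-q$, $d=-q^{1/2}$, followed by $q\mapsto q^2$. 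Until one of these evaluations is carried through --- and you yourself flag this as the step still requiring the most care --- the argument is an outline rather than a proof.
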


\begin{proof}
We begin with the summation formula for ${_8\phi_7}$-series (cf.
\citu{gasper}{p. 355})
 \bnm
  &&\xqdn\xxqdn{_8\phi_7}\ffnk{ccccccc}{q;c}{-c,q\sqrt{-c},-q\sqrt{-c},a,q/a,c,-d,-q/d}{\sqrt{-c},-\sqrt{-c},-cq/a,-ac,-q,cq/d,cd}\\
&&\xqdn\xxqdn\:\:=\:\frac{(-c,-cq;q)_{\infty}(acd,acq/d,cdq/a,cq^2/ad;q^2)_{\infty}}{(cd,cq/d,-ac,-cq/a;q)_{\infty}}.
 \enm
Fix $a=q^{\frac{1}{2}}$, $c=-q$, $d=-q^{\frac{1}{2}}$ to obtain
 \bnm
  \qquad{_5\phi_4}\ffnk{ccccccc}{q;-q}{q,-q^{\frac{3}{2}},q^{\frac{1}{2}},q^{\frac{1}{2}},q^{\frac{1}{2}}}
  {-q^{\frac{1}{2}},q^{\frac{3}{2}},q^{\frac{3}{2}},q^{\frac{3}{2}} }
=\frac{(q,q^2;q)_{\infty}(q^2;q^2)_{\infty}^4}{(q^{\frac{3}{2}};q)_{\infty}^4}.
\enm
 Replacing $q$ by $q^2$ in the last equation, we get Theorem
 \ref{thm-a}.
\end{proof}

\begin{thm}\label{thm-b}
\bnm
\:\:\quad\sum_{k=0}^{\infty}\frac{1+q^{4k+2}}{(1+q^{2k+1})^2}\frac{q^{2k}}{(1-q^{2k+1})^2}
=\frac{(-q^2;q^2)_{\infty}^2(q^4;q^4)_{\infty}^4}{(q^2;q^4)_{\infty}^2}.
 \enm
\end{thm}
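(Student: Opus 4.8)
The plan is to run, almost verbatim, the argument used for Theorem~\ref{thm-a}: invoke the same ${_8\phi_7}$ summation formula (cf. \citu{gasper}{p. 355}), choose a clever specialization of $a,c,d$, simplify the resulting series, and finally replace $q$ by $q^{2}$. Since the present statement carries no alternating sign and only a square (not a cube) in the denominator, the natural guess is to take $a=q^{1/2}$, $c=q$, $d=q^{1/2}$ in place of the values $c=-q$, $d=-q^{1/2}$ used before. With $c=q$ the prefactor $c^{k}=q^{k}$ is positive, so no $(-1)^{k}$ appears, while the very-well-poised factor $(q\sqrt{-c},-q\sqrt{-c};q)_{k}/(\sqrt{-c},-\sqrt{-c};q)_{k}=(1+cq^{2k})/(1+c)$ becomes $(1+q^{2k+1})/(1+q)$, which is exactly the half-degree precursor of the numerator $1+q^{4k+2}$ in the statement.

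Under this specialization the numerator parameters $c=q$ and $-c=-q$ cancel the factors $(q;q)_{k}$ and $(-q;q)_{k}$ in the denominator (the former built in, the latter coming from the bottom parameter $-q$); the four surviving top parameters are $q^{1/2},q^{1/2},-q^{1/2},-q^{1/2}$ and the four surviving bottom ones are $q^{3/2},q^{3/2},-q^{3/2},-q^{3/2}$, so the ${_8\phi_7}$ collapses to
\[
\frac{(1-q)^{2}}{1+q}\sum_{k=0}^{\infty}\frac{q^{k}(1+q^{2k+1})}{(1-q^{2k+1})^{2}}
=\frac{(-q,-q^{2};q)_{\infty}\,(q^{2};q^{2})_{\infty}^{4}}{(q^{3/2};q)_{\infty}^{2}(-q^{3/2};q)_{\infty}^{2}}.
\]
I would then tidy the right-hand side with the elementary facts $(q^{3/2};q)_{\infty}=(q^{1/2};q)_{\infty}/(1-q^{1/2})$, $(-q^{3/2};q)_{\infty}=(-q^{1/2};q)_{\infty}/(1+q^{1/2})$, $(q^{1/2};q)_{\infty}(-q^{1/2};q)_{\infty}=(q;q^{2})_{\infty}$, $(-q;q)_{\infty}=1/(q;q^{2})_{\infty}$ and $(-q^{2};q)_{\infty}=1/\{(1+q)(q;q^{2})_{\infty}\}$, which telescope everything down to the clean intermediate identity
\[
\sum_{k=0}^{\infty}\frac{q^{k}(1+q^{2k+1})}{(1-q^{2k+1})^{2}}=\frac{(q^{2};q^{2})_{\infty}^{4}}{(q;q^{2})_{\infty}^{4}}.
\]
Replacing $q$ by $q^{2}$, writing $(1-q^{4k+2})^{2}=(1-q^{2k+1})^{2}(1+q^{2k+1})^{2}$ on the left, and using $(-q^{2};q^{2})_{\infty}=1/(q^{2};q^{4})_{\infty}$ on the right turns this into the asserted formula.

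The one point that needs a word of care is that the chosen specialization introduces $\sqrt{-c}=\sqrt{-q}$ into the ${_8\phi_7}$. This is harmless: the parameters $\pm q\sqrt{-c}$ and $\pm\sqrt{-c}$ enter the summand only through the rational combination $(1+cq^{2k})/(1+c)$, so no branch of the square root is actually selected, and every other factor on both sides is analytic in $c$ in a neighbourhood of $c=q$ (for $|q|<1$). Hence the summation formula persists by analytic continuation in $c$ and the specialization is legitimate, a remark that suffices. Everything else — the bookkeeping that collapses the ${_8\phi_7}$ and the $q$-product manipulations on the right-hand side — is routine, so I expect no genuine obstacle beyond guessing the correct specialization.
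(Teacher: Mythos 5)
Your proposal is correct, and I verified the key computation: with $a=q^{1/2}$, $c=q$, $d=q^{1/2}$ the ${_8\phi_7}$ summand does collapse to $\frac{1+q^{2k+1}}{1+q}\frac{(q;q^2)_k^2}{(q^3;q^2)_k^2}q^k=\frac{(1-q)^2}{1+q}\frac{q^k(1+q^{2k+1})}{(1-q^{2k+1})^2}$, the right-hand side specializes to $(-q,-q^2;q)_\infty(q^2;q^2)_\infty^4/(q^{3/2},-q^{3/2};q)_\infty^2$, and the product manipulations plus $q\to q^2$ and $(1-q^{4k+2})^2=(1-q^{2k+1})^2(1+q^{2k+1})^2$ yield exactly the stated identity (a numerical check at $q=0.1$ confirms both the intermediate identity and the final one). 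Your route is, however, genuinely different from the paper's. The paper does not re-specialize the summation formula; instead it invokes Bailey's transformation of a very-well-poised ${_8\phi_7}$ into another ${_8\phi_7}$ (Gasper--Rahman, p.~361) with $a=q$, $b=-q$, $c=d=e=f=q^{1/2}$, which converts the sum of Theorem \ref{thm-b} into a multiple of the alternating sum already evaluated in Theorem \ref{thm-a}, and then quotes that theorem. Your argument is more self-contained --- Theorem \ref{thm-b} no longer depends on Theorem \ref{thm-a} --- and is arguably shorter, since it reuses the same summation formula with the sign of $c$ and $d$ flipped; the paper's approach, on the other hand, makes explicit the transformation linking the two series, which is of some independent interest. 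Your remark about $\sqrt{-c}=\sqrt{-q}$ entering only through the combination $(1+cq^{2k})/(1+c)$ is the right thing to say and disposes of the only delicate point.
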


\begin{proof}
A transformation formula for $_8\phi_7$-series (cf. \citu{gasper}{p.
361}) can be stated as
  \bnm
  &&\xqdn{_8\phi_7}\ffnk{ccccccc}
 {q;\frac{a^2q^2}{bcdef}}{a,q\sqrt{a},-q\sqrt{a},b,c,d,e,f}{\sqrt{a},-\sqrt{a},aq/b,aq/c,aq/d,aq/e,aq/f}\\
 &&\xqdn\:\:=\:\frac{(aq,aq/ef,\lambda q/e,\lambda q/f;q)_{\infty}}{(aq/e,aq/f,\lambda q,\lambda q/ef;q)_{\infty}}\\
&&\xqdn\:\:\times\:
 {_8\phi_7}\ffnk{ccccccc}
 {q;\frac{aq}{ef}}{\lambda,q\sqrt{\lambda},-q\sqrt{\lambda},\lambda b/a,\lambda c/a,\lambda d/a,e,f}
  {\sqrt{\lambda},-\sqrt{\lambda},aq/b,aq/c,aq/d,\lambda q/e,\lambda
  q/f},
 \enm
where $\lambda=qa^2/bcd$. Set $a=q$, $b=-q$,
$c=d=e=f=q^{\frac{1}{2}}$ to gain
 \bnm
&&\xqdn\sum_{n=0}^{\infty}\frac{1+q^{n+\frac{1}{2}}}{1+q^{\frac{1}{2}}}\frac{(1-q^{\frac{1}{2}})^3}{(1-q^{n+\frac{1}{2}})^3}(-q)^n
=\frac{(q,q^2,-q^{\frac{3}{2}},-q^{\frac{3}{2}};q)_{\infty}}{(-q,-q^2,q^{\frac{3}{2}},q^{\frac{3}{2}};q)_{\infty}}\\
&&\xqdn\:\:\times\:\sum_{k=0}^{\infty}\frac{1+q^{2k+1}}{1+q}\frac{(1+q^{\frac{1}{2}})^2}{(1+q^{k+\frac{1}{2}})^2}
\frac{(1-q^{\frac{1}{2}})^2}{(1-q^{k+\frac{1}{2}})^2}q^k.
 \enm
 Substituting $q^2$ for $q$ in the last equation, we have
\bnm
\sum_{k=0}^{\infty}\frac{1+q^{4k+2}}{(1+q^{2k+1})^2}\frac{q^{2k}}{(1-q^{2k+1})^2}
=\frac{(q;q^2)_{\infty}^2(-q^2;q^2)_{\infty}^2}{(-q;q^2)_{\infty}^2(q^2;q^2)_{\infty}^2}
\sum_{n=0}^{\infty}(-1)^n\frac{q^{2n}(1+q^{2n+1})}{(1-q^{2n+1})^3}.
 \enm
Calculating the series on the right hand side by Theorem
\ref{thm-a}, we achieve Theorem \ref{thm-b}.
\end{proof}

\begin{thm}\label{thm-c}
\bnm
\sum_{k=0}^{\infty}\frac{(q;q)_k}{(q;q^2)_{k+1}}q^{\binm{k+1}{2}}
=\frac{(q^2;q^2)_{\infty}^2}{(q;q^2)_{\infty}^2}.
 \enm
\end{thm}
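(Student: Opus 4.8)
The plan is to derive Theorem \ref{thm-c} by specializing a classical basic hypergeometric summation, in the same spirit as the proofs of Theorems \ref{thm-a} and \ref{thm-b}, and then to simplify the resulting infinite product with elementary $q$-shifted-factorial identities.

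First I would normalize both sides. On the right, Euler's relations $(q;q^2)_\infty=1/(-q;q)_\infty$ and $(q;q)_\infty=(q;q^2)_\infty(q^2;q^2)_\infty$ give $\frac{(q^2;q^2)_\infty^2}{(q;q^2)_\infty^2}=\frac{(q^2;q^2)_\infty^4}{(q;q)_\infty^2}=(q;q)_\infty^2(-q;q)_\infty^4$, and Gauss's identity $\sum_{n\ge0}q^{\binom{n+1}{2}}=\frac{(q^2;q^2)_\infty}{(q;q^2)_\infty}$ shows that this is also $\bigl(\sum_{n\ge0}q^{\binom{n+1}{2}}\bigr)^2$. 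On the left, $(q;q^2)_{k+1}=(q;q)_{2k+1}/(q^2;q^2)_k$ and $q^{\binom{k+1}{2}}=q^{\binom k2}q^k$, so the summand is $\frac{(q;q)_k(q^2;q^2)_k}{(q;q)_{2k+1}}q^{\binom k2}q^k$; the Gaussian factor $q^{\binom k2}$ together with the doubled factorial $(q;q)_{2k+1}$ in the denominator indicates that the parent identity should be a quadratic (base-changing) summation rather than an ordinary ${}_2\phi_1$.

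Since \eqref{weisstein-a} is the case $a=b=1$ of Gauss's second summation theorem ${}_2F_1\bigl[a,b;\tfrac12(a+b+1);\tfrac12\bigr]$, I expect Theorem \ref{thm-c} to be the analogous specialization (after a base change $q\to q^2$) of a $q$-analogue of Gauss's second theorem of the type recorded in Gasper and Rahman \cito{gasper}. Concretely, I would take that ${}_2\phi_1$-evaluation with argument $\pm q^{1/2}$, make its two numerator parameters coincide, verify that the series then collapses to $\sum_k\frac{(q;q)_k}{(q;q^2)_{k+1}}q^{\binom{k+1}{2}}$ up to a constant, collect the product on the right, reduce it to $\frac{(q^2;q^2)_\infty^2}{(q;q^2)_\infty^2}$ via the relations above, and conclude with the substitution $q\to q^2$ used at the ends of the proofs of Theorems \ref{thm-a} and \ref{thm-b}.

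The delicate point is matching parameters so that the specialized series is precisely our series with no stray factor, and then simplifying the product cleanly; the $q$-Pochhammer bookkeeping afterward is routine. If a convenient parent identity is hard to pin down, a direct proof is available as a fallback: writing $c_k=\frac{(q;q)_kq^{\binom{k+1}{2}}}{(q;q^2)_{k+1}}t^k$, one has the term ratio $\frac{c_{k+1}}{c_k}=\frac{(1-q^{k+1})q^{k+1}t}{1-q^{2k+3}}$, from which a $q$-difference equation for $f(t)=\sum_kc_k$ can be written down, solved, and evaluated at $t=1$; alternatively, one can expand the right-hand side as the Cauchy square of Gauss's series $\sum_nq^{\binom{n+1}{2}}$ and match the coefficient of each power of $q$ against the left-hand side for an elementary, if less elegant, verification.
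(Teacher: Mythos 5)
Your plan is essentially the paper's proof: the paper specializes Andrews's $q$-analogue of Gauss's second summation theorem (the ${}_2\phi_2$ evaluation with argument $-q$ on p.~355 of Gasper--Rahman) by making the two numerator parameters coincide ($a=b=q^{1/2}$, so both equal $q$), and then simplifies the product exactly as you outline. The only cosmetic differences are that the parent identity is a ${}_2\phi_2$ rather than a ${}_2\phi_1$, and the final step is just multiplication by $1/(1-q)$ with no base change $q\to q^2$ required.
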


\begin{proof}
We start with the summation formula for ${_2\phi_2}$-series (cf.
\citu{gasper}{p. 355})
 \bnm
  {_2\phi_2}\ffnk{ccccccc}{q;-q}{a^2, b^2}{abq^{\frac{1}{2}},-abq^{\frac{1}{2}}}
=\frac{(a^2q,b^2q;q^2)_{\infty}}{(q,a^2b^2q;q^2)_{\infty}}.
 \enm
Take $a=b=q^{\frac{1}{2}}$ to attain
 \bnm
\sum_{k=0}^{\infty}\frac{(q;q)_k}{(q^{\frac{3}{2}};q)_k(-q^{\frac{3}{2}};q)_k}q^{\binm{k+1}{2}}
=\frac{(q^2;q^2)_{\infty}^2}{(q,q^3;q^2)_{\infty}}.
 \enm
Multiplying both sides by $1/(1-q)$, we obtain Theorem \ref{thm-c}.
\end{proof}

\begin{thm}\label{thm-d}
\bnm
\quad\sum_{k=0}^{\infty}\frac{(q;q)_k^2}{(q;q)_{2k+1}}q^{2\binm{k+1}{2}}
=\frac{(q^3;q^3)_{\infty}^2}{(q, q^2;q^3)_{\infty}}.
 \enm
\end{thm}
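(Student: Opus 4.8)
The plan is to rewrite the left-hand side as a single basic hypergeometric series and then evaluate it by specializing a known summation formula, exactly in the spirit of the proof of Theorem \ref{thm-c}.

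First I would simplify the summand. Writing $2\binom{k+1}{2}=k^2+k$ and using the elementary factorizations $(q;q)_{2k+1}=(q;q^2)_{k+1}(q^2;q^2)_k$, $(q^2;q^2)_k=(q;q)_k(-q;q)_k$, $(q;q^2)_{k+1}=(1-q)(q^3;q^2)_k$, and $(q^3;q^2)_k=(q^{3/2};q)_k(-q^{3/2};q)_k$, one finds
\[
\frac{(q;q)_k^2}{(q;q)_{2k+1}}\,q^{2\binom{k+1}{2}}
=\frac{1}{1-q}\cdot\frac{(q;q)_k\,q^{k^2+k}}{(-q;q)_k\,(q^{3/2};q)_k\,(-q^{3/2};q)_k}.
\]
Since $q^{k^2+k}=\bigl\{(-1)^kq^{\binom{k}{2}}\bigr\}^{2}q^{2k}$, this puts the identity in the equivalent form
\[
\sum_{k=0}^{\infty}\frac{(q;q)_k^2}{(q;q)_{2k+1}}\,q^{2\binom{k+1}{2}}
=\frac{1}{1-q}\;{}_{2}\phi_{3}\!\left[\begin{matrix}q,\ q\\ -q,\ q^{3/2},\ -q^{3/2}\end{matrix};\,q,\ q^{2}\right].
\]

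Next I would evaluate this ${}_2\phi_3$. Mirroring the proof of Theorem \ref{thm-c} — which started from the ${}_2\phi_2$ summation on \citu{gasper}{p. 355} with $a=b=q^{1/2}$ — the natural move is to locate the Gasper–Rahman summation (or a quadratic transformation) whose evaluation, under a parameter specialization built from $q^{1/2}$ and possibly a preliminary replacement $q\mapsto q^{2}$, reproduces precisely this series; one then substitutes and simplifies, recombining the half-integer powers via $(q^{3/2};q)_k(-q^{3/2};q)_k=(q^3;q^2)_k$ and, on the product side, using $(q;q^3)_\infty(q^2;q^3)_\infty(q^3;q^3)_\infty=(q;q)_\infty$ to cast the answer in the stated form $\dfrac{(q^3;q^3)_\infty^{2}}{(q,q^2;q^3)_\infty}$. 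As a consistency check, both sides equal $1$ at $q=0$ and agree through order $q^{2}$.

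The main obstacle is pinning down the precise summation formula and parameter values: the double $q$ among the upper parameters means the ${}_2\phi_3$ is neither balanced nor very-well-poised in the textbook sense, and the emergence of base-$q^3$ products on the right points to either a cubic-type evaluation or a quadratic transformation feeding into a $q$-Gauss-type sum. Once that formula is identified, the remaining steps are routine $q$-Pochhammer manipulation, just as in the proofs of Theorems \ref{thm-a}--\ref{thm-c}.
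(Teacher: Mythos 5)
Your preliminary reduction is correct: the factorizations $(q;q)_{2k+1}=(q;q^2)_{k+1}(q^2;q^2)_k$, $(q^2;q^2)_k=(q;q)_k(-q;q)_k$, $(q;q^2)_{k+1}=(1-q)(q^3;q^2)_k$ are all right, and your ${}_2\phi_3$ form of the left-hand side is an accurate repackaging, as is your low-order consistency check. But the proof stops exactly where the actual work begins: you never identify the summation formula that evaluates the series, and you say so yourself ("the main obstacle is pinning down the precise summation formula and parameter values"). That evaluation is the entire content of the theorem, so as it stands this is a gap, not a proof. Your instinct that the base-$q^3$ products on the right point to a cubic-type evaluation is the correct one, but without the formula in hand the argument does not close.

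For the record, the paper's route is the cubic summation of Gasper and Rahman \citu{gasper}{p. 110} (equivalently, the limit $n\to\infty$ of Chu's terminating cubic summation \citu{chu}{p. 65}), which in its nonterminating limiting form reads
\[
\sum_{k=0}^{\infty}\frac{1-a^2q^{4k}}{1-a^2}\,\frac{(b,q^2/b;q)_k\,(a^2/q;q)_{2k}}{(a^2q^3/b,a^2bq;q^3)_k\,(q^2;q)_{2k}}\,q^{k^2+k}
=\frac{(a^2q;q)_{\infty}\,(q^3,bq^2,q^4/b;q^3)_{\infty}}{(q^2;q)_{\infty}\,(a^2q^2,a^2q^3/b,a^2bq;q^3)_{\infty}}.
\]
Setting $a=0$, $b=q$ collapses the left side to $\sum_{k\ge0}(q;q)_k^2\,q^{k^2+k}/(q^2;q)_{2k}$, which is $(1-q)$ times the theorem's left side since $(q^2;q)_{2k}=(q;q)_{2k+1}/(1-q)$, while the right side becomes $(q^3;q^3)_\infty^3/(q^2;q)_\infty=(1-q)(q^3;q^3)_\infty^2/(q,q^2;q^3)_\infty$. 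Note that in this route one never splits $(q^2;q)_{2k}$ into half-integer-base factors at all; your rewriting into a ${}_2\phi_3$ with lower parameters $-q,\,q^{3/2},\,-q^{3/2}$ is harmless but is a step away from, rather than toward, the form in which the relevant cubic summation is stated. To complete your argument you would need to exhibit this (or an equivalent) evaluation explicitly and verify the specialization.
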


\begin{proof}

Gasper and Rahman's identity (cf. \citu{gasper}{p. 110}) can be
expressed as
  \bnm
  &&\xxqdn\sum_{k=0}^{\infty}\frac{1-a^2q^{4k}}{1-a^2}\frac{(b,q^2/b;q)_k(a^2/q;q)_{2k}(c^3,a^2q^2/c^3;q^3)_k}
  {(a^2q^3/b,a^2bq;q^3)_k(q^2;q)_{2k}(a^2q/c^3,c^3/q;q)_k}q^k\\
 &&\xxqdn\:\:=\:\frac{(bq^2,q^4/b,bc^3/q,c^3q/b,c^3/a^2,c^3q^2/a^2,a^2q,a^2q^3;q^3)_{\infty}}
 {(q^2,q^4,c^3q,bc^3/a^2,a^2q^3/b,a^2bq,c^3q^2/a^2b;q^3)_{\infty}}\\
&&\xxqdn\:\:-\:\frac{(b,bq,bq^2,q^2/b,q^3/b,q^4/b,a^2/q,a^2q,a^2q^3,c^3/a^2,c^6q/a^2;q^3)_{\infty}}
 {(q^2,q^4,c^3/q,c^3q,a^2/c^3,a^2q/c^3,c^3q^3/a^2,c^3q^3/a^2b,a^3q^3/b,a^2bq,bq^3/a^2;q^3)_{\infty}}\\
&&\xxqdn\:\:\times\:\:
 {_2\phi_1}\ffnk{ccccccc}{q^3;q^3}{bc^3/a^2,c^3q^2/a^2b}{c^6q/a^2}.
 \enm
The terminating form of it due to Chu \citu{chu}{p. 65} is
  \bnm
  &&\xxqdn\sum_{k=0}^{n}\frac{1-a^2q^{4k}}{1-a^2}\frac{(b,q^2/b;q)_k(a^2/q;q)_{2k}(a^2q^{2+3n},q^{-3n};q^3)_k}
  {(a^2q^3/b,a^2bq;q^3)_k(q^2;q)_{2k}(a^2q^{1+3n},q^{-3n-1};q)_k}q^k\\
 &&\xxqdn\:\:=\:\frac{(a^2q;q)_{3n}(q^3,bq^2,q^4/b;q^3)_n}
 {(q^2;q)_{3n}(a^2q^2,a^2q^3/b,a^2bq;q^3)_n}.
 \enm
Let $n\to \infty$ to get
 \bnm
  &&\xxqdn\sum_{k=0}^{\infty}\frac{1-a^2q^{4k}}{1-a^2}\frac{(b,q^2/b;q)_k(a^2/q;q)_{2k}}
  {(a^2q^3/b,a^2bq;q^3)_k(q^2;q)_{2k}}q^{k^2+k}\\
 &&\xxqdn\:\:=\:\frac{(a^2q;q)_{\infty}(q^3,bq^2,q^4/b;q^3)_{\infty}}
 {(q^2;q)_{\infty}(a^2q^2,a^2q^3/b,a^2bq;q^3)_{\infty}}.
 \enm
 Fixing $a=0$, $b=q$ in the last
equation, we gain Theorem \ref{thm-d}.
\end{proof}

\begin{thm}\label{thm-e}
\bnm
\quad\sum_{k=0}^{\infty}\frac{1-q^{3k+2}}{1-q^2}\frac{(q^2;q^2)_k(q;q)_k^2}{(q^3;q^2)_{k}^3}q^{\binm{k+1}{2}}
=\frac{(q^4;q^2)_{\infty}(q^2;q^2)_{\infty}^3}{(q;q^2)_{\infty}(q^3;q^2)_{\infty}^3}.
 \enm
\end{thm}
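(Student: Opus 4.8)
The plan is to follow the template used in the proofs of Theorems \ref{thm-c} and \ref{thm-d}: start from a classical basic hypergeometric identity carrying several free parameters, specialize them, and --- when the starting point is a terminating identity --- pass to its nonterminating limit exactly as in the proof of Theorem \ref{thm-d}. The observation that fixes the target is that the right-hand side of Theorem \ref{thm-e} is, after the renaming $a=q^2$ and $b=c=d=q$, precisely the value
\[
\frac{(aq^2,aq^2/bc,aq^2/bd,aq^2/cd;q^2)_\infty}{(aq^2/b,aq^2/c,aq^2/d,aq^2/bcd;q^2)_\infty}
\]
of a very-well-poised ${}_6\phi_5$-series in base $q^2$; indeed, with those choices $aq^2=q^4$, $aq^2/bc=aq^2/bd=aq^2/cd=q^2$, $aq^2/b=aq^2/c=aq^2/d=q^3$ and $aq^2/bcd=q$. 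So the target is this very-well-poised ${}_6\phi_5$-sum, evaluated by the very-well-poised ${}_6\phi_5$-summation (cf. \cito{gasper}). The left-hand side of Theorem \ref{thm-e} is, however, \emph{not} that ${}_6\phi_5$: it carries the well-poised factor $\frac{1-q^{3k+2}}{1-q^2}$ in place of $\frac{1-q^{4k+2}}{1-q^2}$, a Gaussian $q^{\binm{k+1}{2}}$, and a genuinely mixed-base summand $(q^2;q^2)_k(q;q)_k^2/(q^3;q^2)_k^3$. Hence what is needed is a quadratic- or cubic-type transformation (or summation) whose product side is the ${}_6\phi_5$-value above --- for instance a quadratic summation in the spirit of Rahman \cito{rahman}, or one of the quadratic/cubic entries in Gasper and Rahman \cito{gasper}, or a terminating analogue thereof.

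Concretely, I would first rewrite $\frac{1-q^{3k+2}}{1-q^2}=\frac{(q^2;q^3)_{k+1}}{(1-q^2)(q^2;q^3)_k}$ and recast the summand of Theorem \ref{thm-e} as a quotient of $q$-shifted factorials in the three bases $q$, $q^2$, $q^3$, so that it becomes visibly a special --- or limiting --- case of a standard mixed-base series. I would then choose the transformation whose parameters can be tuned so that (i) its argument degenerates to produce the Gaussian factor $q^{\binm{k+1}{2}}$ --- this typically forces one free parameter to $0$ or $\infty$, just as the choice $a=0$ did in the proof of Theorem \ref{thm-d} --- and (ii) its product side collapses to the ${}_6\phi_5$-value identified above. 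If the chosen identity is terminating, the next step is to let the truncation parameter tend to infinity, justifying the termwise passage to the limit exactly as was done for Theorem \ref{thm-d}. A final round of routine $q$-shifted-factorial bookkeeping --- splitting base-$q$ products into base-$q^2$ and base-$q^3$ pieces via $(q;q)_{2k}=(q;q^2)_k(q^2;q^2)_k$, $(q;q^3)_\infty=(1-q)(q^4;q^3)_\infty$, and the like --- then brings both sides to the stated form.

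The main obstacle is steps (i)--(ii): pinning down the precise known $q$-series identity, together with the precise one-parameter degeneration of it, that simultaneously generates the well-poised factor $\frac{1-q^{3k+2}}{1-q^2}$, the Gaussian $q^{\binm{k+1}{2}}$, and the cube $(q^3;q^2)_k^{3}$ in the denominator, while keeping its product side equal to the ${}_6\phi_5$-value above. Once that identity and its specialization are in hand, the remainder is mechanical. As a consistency check, letting $q\to1^-$ should return \eqref{guillera-a}: the left-hand side tends to $\frac{1}{2}\sum_{k=0}^{\infty}\frac{k!^3}{(\frac{3}{2})_k^3}\frac{3k+2}{4^k}$, and a standard $q$-gamma asymptotic analysis of the theta quotient on the right gives $\pi^2/8$, in agreement with $\frac{1}{2}\cdot\frac{\pi^2}{4}$.
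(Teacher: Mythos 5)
Your overall strategy is exactly the one the paper follows --- start from a classical quadratic summation with free parameters, pass to the nonterminating limit as in the proof of Theorem \ref{thm-d}, specialize, and rescale the base --- and two of your observations are genuinely on target: the right-hand side is indeed the very-well-poised ${}_6\phi_5$ product value with $a=q^2$, $b=c=d=q$ in base $q^2$, and your $q\to1^-$ consistency check against \eqref{guillera-a} is correct. But the proposal is not a proof: you explicitly defer what you yourself call ``the main obstacle,'' namely the identification of the precise identity whose specialization produces simultaneously the well-poised factor $\frac{1-q^{3k+2}}{1-q^2}$, the Gaussian $q^{\binom{k+1}{2}}$, and the cube $(q^3;q^2)_k^3$. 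That identification is the entire mathematical content here; without it nothing has been verified, and ``a quadratic summation in the spirit of Rahman or one of the entries in Gasper--Rahman'' is not a citable starting point.

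For the record, the identity you are missing is Chu's terminating quadratic summation (Compositio Math.\ 95 (1995), p.\ 63),
 \bnm
  &&\xxqdn\sum_{k=0}^{n}\frac{1-aq^{\frac{3k-1}{2}}}{1-aq^{-\frac{1}{2}}}
  \frac{(q^{-n},q^{n}a,q^{-\frac{1}{2}}a;q)_k(q^{-\frac{1}{2}}u,q^{-\frac{1}{2}}v,qa/uv;q^{\frac{1}{2}})_k}
  {(qa/u,qa/v,q^{-\frac{1}{2}}uv;q)_k(q^{\frac{1}{2}},aq^{n},q^{-n};q^{\frac{1}{2}})_k}q^{\frac{k}{2}}
  =\frac{(u,v,aq^{\frac{1}{2}},aq^{\frac{3}{2}}/uv;q)_n}
 {(q^{\frac{1}{2}},qa/u,qa/v,q^{-\frac{1}{2}}uv;q)_n},
 \enm
whose $n\to\infty$ limit acquires the factor $q^{\frac{k^2+k}{4}}$ (your Gaussian, after the base change) from the pairs $(q^{-n};q)_k/(q^{-n};q^{\frac12})_k$ and $(q^na;q)_k/(aq^n;q^{\frac12})_k$; setting $a=q^{\frac32}$, $u=v=q$ and then replacing $q$ by $q^2$ gives Theorem \ref{thm-e} directly. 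Note in particular that the termination parameter $n$, not a parameter sent to $0$ as in Theorem \ref{thm-d}, is what generates the Gaussian factor --- a detail your step (i) guesses at but does not settle. As it stands the argument has a genuine gap at its central step.
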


\begin{proof}

Recall Chu's identity (cf. \citu{chu}{p. 63})
  \bnm
  &&\xxqdn\sum_{k=0}^{n}\frac{1-aq^{\frac{3k-1}{2}}}{1-aq^{-\frac{1}{2}}}
  \frac{(q^{-n},q^{n}a,q^{-\frac{1}{2}}a;q)_k(q^{-\frac{1}{2}}u,q^{-\frac{1}{2}}v,qa/uv;q^{\frac{1}{2}})_k}
  {(qa/u,qa/v,q^{-\frac{1}{2}}uv;q)_k(q^{\frac{1}{2}},aq^{n},q^{-n};q^{\frac{1}{2}})_k}q^{\frac{k}{2}}\\
 &&\xxqdn\:\:=\:\frac{(u,v,aq^{\frac{1}{2}},aq^{\frac{3}{2}}/uv;q)_n}
 {(q^{\frac{1}{2}},qa/u,qa/v,q^{-\frac{1}{2}}uv;q)_n}.
 \enm
The case $n\to \infty$ of it reads
 \bnm
  &&\qdn\xxqdn\sum_{k=0}^{\infty}\frac{1-aq^{\frac{3k-1}{2}}}{1-aq^{-\frac{1}{2}}}
  \frac{(q^{-\frac{1}{2}}a;q)_k(q^{-\frac{1}{2}}u,q^{-\frac{1}{2}}v,qa/uv;q^{\frac{1}{2}})_k}
  {(qa/u,qa/v,q^{-\frac{1}{2}}uv;q)_k(q^{\frac{1}{2}};q^{\frac{1}{2}})_k}q^{\frac{k^2+k}{4}}\\
 &&\qdn\xxqdn\:\:=\:\frac{(u,v,aq^{\frac{1}{2}},aq^{\frac{3}{2}}/uv;q)_{\infty}}
 {(q^{\frac{1}{2}},qa/u,qa/v,q^{-\frac{1}{2}}uv;q)_{\infty}}.
 \enm
  Set $a=q^{\frac{3}{2}}$, $u=v=q$ to achieve
 \bnm
\quad\sum_{k=0}^{\infty}\frac{1-q^{\frac{3k+2}{2}}}{1-q}\frac{(q;q)_k(q^{\frac{1}{2}};q^{\frac{1}{2}})_k^2}{(q^{\frac{3}{2}};q)_{k}^3}q^{\frac{k^2+k}{4}}
=\frac{(q^2;q)_{\infty}(q;q)_{\infty}^3}{(q^{\frac{1}{2}};q)_{\infty}(q^{\frac{3}{2}};q)_{\infty}^3}.
 \enm
Replacing $q$ by $q^2$ in the last equation, we attain Theorem
 \ref{thm-e}.
\end{proof}
\section{Telescoping method and $q$-analogues of $\pi$-formulas}
For a complex sequence $\{\tau_k\}_{k\in \mathbb{Z}}$, define
 the difference operator to be
\[\nabla\tau_k=\tau_k-\tau_{k-1}.\]
Then the telescoping method can offer the following relation:
 \bmn\label{telescoping-summation}
\:\qquad\sum_{k=0}^{n}\nabla\tau_k=\tau_n-\tau_{-1}.
 \emn
According to \eqref{telescoping-summation}, we shall establish a
general identity in this section. Subsequently, it will be utilized
to deduce $q$-analogues of \eqref{wei-a}-\eqref{guillera-b}.

\begin{thm}\label{thm-aa}
 Let $\{x_i\}_{i=1}^s$ and $\{y_i\}_{i=1}^s$ be complex
numbers. Then
 \bnm
&&\sum_{k=0}^{\infty}\frac{(x_1,x_2,\cdots,x_s;q)_k}{(qy_1,qy_2,\cdots,qy_s;q)_k}
\bigg\{\prod_{i=1}^s(1-q^kx_i)-\prod_{i=1}^s(1-q^ky_i)\bigg\}\\
&&\:\,=\:\frac{(x_1,x_2,\cdots,x_s;q)_{\infty}}{(qy_1,qy_2,\cdots,qy_s;q)_{\infty}}-\prod_{i=1}^s(1-y_i).
 \enm
\end{thm}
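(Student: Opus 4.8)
The plan is to recognise the left-hand side as a telescoping sum. Define, for $k\geq -1$,
\[
\tau_k=\frac{(x_1,x_2,\cdots,x_s;q)_{k+1}}{(qy_1,qy_2,\cdots,qy_s;q)_k},
\]
where I use the standard convention $(a;q)_{-1}=1/(1-a/q)$, so that in particular $(qy_i;q)_{-1}=1/(1-y_i)$. First I would check that $\nabla\tau_k$ is exactly the $k$-th summand on the left. Using $(x_i;q)_{k+1}=(x_i;q)_k(1-q^kx_i)$ gives
\[
\tau_k=\frac{(x_1,x_2,\cdots,x_s;q)_k}{(qy_1,qy_2,\cdots,qy_s;q)_k}\prod_{i=1}^s(1-q^kx_i),
\]
while using $(qy_i;q)_k=(qy_i;q)_{k-1}(1-q^ky_i)$ gives
\[
\tau_{k-1}=\frac{(x_1,x_2,\cdots,x_s;q)_k}{(qy_1,qy_2,\cdots,qy_s;q)_{k-1}}=\frac{(x_1,x_2,\cdots,x_s;q)_k}{(qy_1,qy_2,\cdots,qy_s;q)_k}\prod_{i=1}^s(1-q^ky_i).
\]
Subtracting, $\nabla\tau_k=\tau_k-\tau_{k-1}$ equals precisely the general term of the series in the theorem.

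Next I would invoke the telescoping relation \eqref{telescoping-summation} to obtain $\sum_{k=0}^{n}\nabla\tau_k=\tau_n-\tau_{-1}$. The boundary value at $k=-1$ is $\tau_{-1}=(x_1,x_2,\cdots,x_s;q)_0/(qy_1,qy_2,\cdots,qy_s;q)_{-1}=\prod_{i=1}^s(1-y_i)$, which is exactly the subtracted term on the right-hand side. Letting $n\to\infty$ and using $|q|<1$, we have $(x_i;q)_{n+1}\to(x_i;q)_\infty$ and $(qy_i;q)_n\to(qy_i;q)_\infty$, so $\tau_n\to(x_1,x_2,\cdots,x_s;q)_\infty/(qy_1,qy_2,\cdots,qy_s;q)_\infty$; combining the two limits yields the asserted identity.

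The only genuine point requiring care is the bookkeeping of the $k=-1$ term through the convention $(a;q)_{-1}=1/(1-a/q)$, together with the (tacit) hypothesis that $q^jy_i\neq 1$ for all $i$ and all integers $j\geq 1$, which is needed for every term — and the limiting value $\tau_n$ — to be well defined. Convergence of the series and of $\tau_n$ as $n\to\infty$ is automatic from $|q|<1$, and everything else reduces to the elementary recurrence $(a;q)_{k+1}=(1-q^ka)(a;q)_k$. So I do not expect any real obstacle; the content of the theorem is entirely the choice of $\tau_k$.
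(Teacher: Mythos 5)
Your proof is correct and follows essentially the same route as the paper: the paper telescopes with $\tau_k=(qx_1,\cdots,qx_s;q)_k/(qy_1,\cdots,qy_s;q)_k$, which is your $\tau_k$ divided by the constant $\prod_{i=1}^s(1-x_i)$, and likewise passes through the terminating identity before letting $n\to\infty$. Your normalization is marginally cleaner since it avoids carrying the factor $\prod_{i=1}^s(1-x_i)^{-1}$, but the argument is the same.
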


\begin{proof}
Taking
\[\tau_k=\frac{(qx_1,qx_2,\cdots,qx_s;q)_k}{(qy_1,qy_2,\cdots,qy_s;q)_k},\]
we obtain
 \bnm
\nabla\tau_k&&\xqdn=\:\frac{(qx_1,qx_2,\cdots,qx_s;q)_k}{(qy_1,qy_2,\cdots,qy_s;q)_k}
-\frac{(qx_1,qx_2,\cdots,qx_s;q)_{k-1}}{(qy_1,qy_2,\cdots,qy_s;q)_{k-1}}\\
 &&\xqdn=\:\frac{(x_1,x_2,\cdots,x_s;q)_k}{(qy_1,qy_2,\cdots,qy_s;q)_k}
\bigg\{\prod_{i=1}^s(1-q^kx_i)-\prod_{i=1}^s(1-q^ky_i)\bigg\}\prod_{i=1}^s(1-x_i)^{-1}.
 \enm
Substituting the expressions of $\tau_k$ and $\nabla\tau_k$ into
\eqref{telescoping-summation}, the result can be written as
  \bmn\label{terminating-sum}
&&\sum_{k=0}^{n}\frac{(x_1,x_2,\cdots,x_s;q)_k}{(qy_1,qy_2,\cdots,qy_s;q)_k}
\bigg\{\prod_{i=1}^s(1-q^kx_i)-\prod_{i=1}^s(1-q^ky_i)\bigg\}
 \nnm\\
&&\:\,=\:\frac{(x_1,x_2,\cdots,x_s;q)_{n+1}}{(qy_1,qy_2,\cdots,qy_s;q)_{n}}-\prod_{i=1}^s(1-y_i).
 \emn
Notice that
 \bnm
 &&\prod_{i=1}^s(1-q^kx_i)-\prod_{i=1}^s(1-q^ky_i)
 \\&&\:=\big\{1+a_1q^k+a_2q^{2k}+\cdots+a_sq^{sk}\big\}
\\&&\:-\:\big\{1+b_1q^k+b_2q^{2k}+\cdots+b_sq^{sk}\big\}
\\&&\:=(a_1-b_1)q^k+(a_2-b_2)q^{2k}+\cdots+(a_s-b_s)q^{sk}.
\enm
 The case $n\to \infty$ of \eqref{terminating-sum} reads
\bnm
&&\sum_{k=0}^{\infty}\frac{(x_1,x_2,\cdots,x_s;q)_k}{(qy_1,qy_2,\cdots,qy_s;q)_k}
\bigg\{\prod_{i=1}^s(1-q^kx_i)-\prod_{i=1}^s(1-q^ky_i)\bigg\}\\
&&\:\,=\:\frac{(x_1,x_2,\cdots,x_s;q)_{\infty}}{(qy_1,qy_2,\cdots,qy_s;q)_{\infty}}-\prod_{i=1}^s(1-y_i).
 \enm
\end{proof}

 Performing the the replacements  $y_i\to1$, $x_{m+i}\to q/x_i$, $y_{m+i}\to q$ with $1\leq i\leq m$ in Theorem
\ref{thm-aa} when $s=2m$, we get the following identity.

\begin{corl}\label{corl-aa}
 Let $\{x_i\}_{i=1}^m$ be complex numbers. Then
 \bnm
 \frac{\prod_{i=1}^m(x_i,q/x_i;q)_{\infty}}{(q,q^2;q)_{\infty}^m}&&\xqdn=\:
  \sum_{k=0}^{\infty}\frac{\prod_{i=1}^m(x_i,q/x_i;q)_{k}}{(q,q^2;q)_k^m}\\
  &&\xqdn\times\:\bigg\{\prod_{i=1}^m(1-q^kx_i)(1-q^{k+1}/x_i)-(1-q^k)^m(1-q^{k+1})^m\bigg\}.
 \enm
\end{corl}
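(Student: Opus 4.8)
The plan is to read off Corollary~\ref{corl-aa} directly from Theorem~\ref{thm-aa}; no fresh telescoping is required. First I would take $s=2m$ in Theorem~\ref{thm-aa} and carry out the announced specialization $y_i\to 1$, $y_{m+i}\to q$, $x_{m+i}\to q/x_i$ for $1\le i\le m$, keeping $x_1,\dots,x_m$ free. This is legitimate because Theorem~\ref{thm-aa} was established for arbitrary complex parameters: for $|q|<1$ the only $q$-shifted factorials that can land in a denominator are $(q;q)_k$ and $(q^2;q)_k$, both nonzero, and convergence of the sum is unaffected since the proof of Theorem~\ref{thm-aa} used only that $\prod_i(1-q^kx_i)-\prod_i(1-q^ky_i)$ has vanishing constant term, a property that persists after any specialization.

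Next I would simplify each piece. The denominators contract to $\prod_{i=1}^{2m}(qy_i;q)_k=\bigl((q;q)_k(q^2;q)_k\bigr)^m=(q,q^2;q)_k^m$ (and likewise with $k$ replaced by $\infty$); the numerators pair off as $\prod_{i=1}^{2m}(x_i;q)_k=\prod_{i=1}^m(x_i,q/x_i;q)_k$; and the bracket becomes $\prod_{i=1}^{2m}(1-q^kx_i)=\prod_{i=1}^m(1-q^kx_i)(1-q^{k+1}/x_i)$ set against $\prod_{i=1}^{2m}(1-q^ky_i)=(1-q^k)^m(1-q^{k+1})^m$. The decisive point is that the extra term $\prod_{i=1}^{2m}(1-y_i)$ on the right-hand side of Theorem~\ref{thm-aa} contains the factor $1-y_1=1-1=0$ and therefore vanishes, so the right-hand side collapses to $\prod_{i=1}^m(x_i,q/x_i;q)_\infty/(q,q^2;q)_\infty^m$. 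Transposing that infinite product to the left-hand side yields precisely the stated identity.

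I expect no real obstacle; the only things worth a sentence are that the specialized denominators stay finite and nonzero (immediate for $0<|q|<1$) and that convergence is inherited from Theorem~\ref{thm-aa}. It is exactly the vanishing of $\prod_{i=1}^{2m}(1-y_i)$ at $y_1=1$ that strips the corollary down to its clean, constant-term-free form. If a self-contained derivation were wanted instead, one could re-run \eqref{telescoping-summation} directly with $\tau_k=\prod_{i=1}^m(qx_i,q^2/x_i;q)_k/(q,q^2;q)_k^m$, noting that $\tau_{-1}=0$ because $(q;q)_{-1}$ is infinite; but invoking the already-proved Theorem~\ref{thm-aa} is the shorter route.
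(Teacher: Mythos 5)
Your proposal is correct and is exactly the paper's argument: the paper obtains Corollary~\ref{corl-aa} precisely by setting $s=2m$, $y_i\to1$, $x_{m+i}\to q/x_i$, $y_{m+i}\to q$ in Theorem~\ref{thm-aa}, with the term $\prod_{i=1}^{2m}(1-y_i)$ vanishing because $y_1=1$. Your write-up in fact supplies more detail (the pairing of factors and the convergence remark) than the paper's one-line derivation.
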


Substituting $q^{x_i}$ for $x_i$ and then letting $q\to1$, Corollary
\ref{corl-aa} becomes
 \bnm
 \frac{\prod_{i=1}^m\sin(\pi x_i)}{\pi^m}&&\xqdn=\:
  \sum_{k=0}^{\infty}\frac{\prod_{i=1}^m(x_i)_{k}(1-x_i)_k}{k!^m(k+1)!^m}\bigg\{\prod_{i=1}^m(k+x_i)(k+1-x_i)-k^m(k+1)^m\bigg\}.
 \enm

When $m=2$, Corollary \ref{corl-aa} reduces to the following
$q$-analogue of \eqref{wei-a}:
 \bnm
 &&\frac{(x,y,q/x,q/y;q)_{\infty}}{(q,q^2;q)_{\infty}^2}=
  \sum_{k=0}^{\infty}\frac{(x,y,q/x,q/y;q)_{k}}{(q,q^2;q)_k^2}\\
  &&\times\:\Big\{(1-q^kx)(1-q^ky)(1-q^{k+1}/x)(1-q^{k+1}/y)-(1-q^k)^2(1-q^{k+1})^2\Big\}.
 \enm

When $x=y=q^{1/2}$, the last equation reduces to the following
$q$-analogue of \eqref{guillera-b}:
  \bnm
 &&\frac{(q^{\frac{1}{2}};q)_{\infty}^4}{(q,q^2;q)_{\infty}^2}=
  \sum_{k=0}^{\infty}\frac{(q^{\frac{1}{2}};q)_{k}^4}{(q,q^2;q)_k^2}
  \Big\{(1-q^{k+\frac{1}{2}})^4-(1-q^k)^2(1-q^{k+1})^2\Big\}.
 \enm

Performing the the replacements $x_i\to q$, $y_i\to x_i$,
$x_{m+i}\to q$, $y_{m+i}\to q^2/x_i$ with $1\leq i\leq m$ in Theorem
\ref{thm-aa} when $s=2m$, we gain the following identity.

\begin{corl}\label{corl-bb}
 Let $\{x_i\}_{i=1}^m$ be complex numbers. Then
 \bnm
 \frac{(q,;q)_{\infty}^{2m}}{\prod_{i=1}^m(x_i,q/x_i;q)_{\infty}}&&\xqdn=\:
  \frac{1}{\prod_{i=1}^m(1-q/x_i)}+\sum_{k=0}^{\infty}\frac{(q;q)_k^{2m}}{\prod_{i=1}^m(x_i;q)_{k+1}(q/x_i;q)_{k+2}}\\
  &&\xqdn\times\:\bigg\{(1-q^{k+1})^{2m}-\prod_{i=1}^m(1-q^kx_i)(1-q^{k+2}/x_i)\bigg\}.
 \enm
\end{corl}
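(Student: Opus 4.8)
The proof will be a direct specialisation of Theorem~\ref{thm-aa}, as announced, so the whole task is one of careful bookkeeping. I would start from the identity of Theorem~\ref{thm-aa} with $s=2m$ and perform the substitutions $x_i\to q$, $y_i\to x_i$, $x_{m+i}\to q$, $y_{m+i}\to q^2/x_i$ for $1\le i\le m$. Because every upper parameter is then equal to $q$, the numerator factorial collapses to $(q;q)_k^{2m}$ and $\prod_{i=1}^{s}(1-q^kx_i)$ becomes $(1-q^{k+1})^{2m}$. On the lower side the parameters split into the two families $qx_i$ and $q^3/x_i$, so that $(qy_1,\dots,qy_s;q)_k=\prod_{i=1}^m(qx_i;q)_k(q^3/x_i;q)_k$, while $\prod_{i=1}^{s}(1-q^ky_i)=\prod_{i=1}^m(1-q^kx_i)(1-q^{k+2}/x_i)$ and $\prod_{i=1}^{s}(1-y_i)=\prod_{i=1}^m(1-x_i)(1-q^2/x_i)$. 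At this point the specialised Theorem~\ref{thm-aa} already has the shape of Corollary~\ref{corl-bb}, except that the $q$-shifted factorials are normalised differently.

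To match the normalisations I would shift the indices in the lower products using $(x_i;q)_{k+1}=(1-x_i)(qx_i;q)_k$ and $(q/x_i;q)_{k+2}=(1-q/x_i)(1-q^2/x_i)(q^3/x_i;q)_k$, together with the corresponding limiting identities $(x_i;q)_\infty=(1-x_i)(qx_i;q)_\infty$ and $(q/x_i;q)_\infty=(1-q/x_i)(1-q^2/x_i)(q^3/x_i;q)_\infty$. Writing $c=\prod_{i=1}^m(1-x_i)(1-q/x_i)(1-q^2/x_i)$, these give $\prod_{i=1}^m(qx_i;q)_k(q^3/x_i;q)_k=c^{-1}\prod_{i=1}^m(x_i;q)_{k+1}(q/x_i;q)_{k+2}$ and likewise for the infinite products, so the left-hand side of the specialised Theorem~\ref{thm-aa} equals $c$ times the series in Corollary~\ref{corl-bb}, and the first term on its right-hand side equals $c\,(q;q)_\infty^{2m}\big/\prod_{i=1}^m(x_i,q/x_i;q)_\infty$. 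Dividing the whole identity by the nonzero scalar $c$ therefore produces the left-hand side of Corollary~\ref{corl-bb} together with the required series, while the leftover term $\prod_{i=1}^{s}(1-y_i)$ becomes $c^{-1}\prod_{i=1}^m(1-x_i)(1-q^2/x_i)=\prod_{i=1}^m(1-q/x_i)^{-1}$; transposing this constant to the other side yields the stated formula.

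The only genuine obstacle is this normalisation step: one must cancel the single scalar factor $c$ uniformly from the summand, from the infinite-product term and from the constant, and it is easy to misplace a factor of $(1-x_i)$, $(1-q/x_i)$ or $(1-q^2/x_i)$ along the way. No new convergence question arises beyond the one already settled inside the proof of Theorem~\ref{thm-aa}, since we merely rescale a convergent identity by a constant.
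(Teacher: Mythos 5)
Your proposal is correct and follows exactly the route the paper intends: the paper simply announces the substitution $x_i\to q$, $y_i\to x_i$, $x_{m+i}\to q$, $y_{m+i}\to q^2/x_i$ in Theorem \ref{thm-aa} with $s=2m$, and your careful tracking of the normalising constant $c=\prod_{i=1}^m(1-x_i)(1-q/x_i)(1-q^2/x_i)$ supplies precisely the bookkeeping the paper leaves implicit. All the factor cancellations check out, including the reduction of $c^{-1}\prod_{i=1}^m(1-x_i)(1-q^2/x_i)$ to $\prod_{i=1}^m(1-q/x_i)^{-1}$.
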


Substituting $q^{x_i}$ for $x_i$ and then letting $q\to1$, Corollary
\ref{corl-bb} becomes
 \bnm
 \quad\frac{\pi^m}{\prod_{i=1}^m\sin(\pi x_i)}&&\xqdn=\:\frac{1}{\prod_{i=1}^m(1-x_i)}+
  \sum_{k=0}^{\infty}\frac{k!^{2m}}{\prod_{i=1}^m(x_i)_{k+1}(1-x_i)_{k+2}}\\
  &&\xqdn\times\:\Big\{(k+1)^{2m}-\prod_{i=1}^m(k+x_i)(k+2-x_i)\Big\}.
 \enm

When $m=2$, Corollary \ref{corl-bb} reduces to the following
$q$-analogue of \eqref{wei-b}:
 \bnm
 &&\frac{(q;q)_{\infty}^4}{(x,y,q/x,q/y;q)_{\infty}}=\frac{1}{(1-q/x)(1-q/y)}+
  \sum_{k=0}^{\infty}\frac{(q;q)_k^4}{(x,y;q)_{k+1}(q/x,q/y;q)_{k+2}}\\
  &&\:\times\:\Big\{(1-q^{k+1})^4-(1-q^kx)(1-q^ky)(1-q^{k+2}/x)(1-q^{k+2}/y)\Big\}.
 \enm

\textbf{Acknowledgments}

 The work is supported by the National Natural Science Foundation of China (No. 11661032).


\end{document}